\documentclass[english]{amsart}

\usepackage{amssymb,amsmath,amsthm,amsrefs,esint,braket,mathtools,enumitem,hyperref,comment,xcolor,doi,microtype}

\usepackage[margin=25mm]{geometry}

\usepackage[capitalise]{cleveref}

\usepackage[T1]{fontenc}


\providecommand{\abs}[1]{\left\vert #1 \right\vert}
\providecommand{\norm}[1]{\left\Vert #1 \right\Vert}

\providecommand{\pt}[1]{\left( #1 \right)}
\providecommand{\spt}[1]{\left[ #1 \right]}


\newcommand{\dist}{\mathrm{dist}}
\newcommand{\RR}{\mathbb R}

\newcommand{\NN}{\mathbb N}
\newcommand{\Ss}{\mathbb S}

\newcommand{\D}{\,\mathrm d}

\newcommand{\dx}{{\D x}}

\newcommand{\ve}{\varepsilon}


\newtheorem{theorem}{Theorem}[section]
\newtheorem{proposition}{Proposition}[section]

\newtheorem{corollary}{Corollary}[section]

\newtheorem*{conjecture}{Conjecture}

\Crefname{corollary}{Corollary}{Corollaries}
\Crefname{lemma}{Lemma}{Lemmas}
\Crefname{theorem}{Theorem}{Theorems}
\Crefname{proposition}{Proposition}{Propositions}
\Crefname{definition}{Definition}{Definitions}

\begin{document}

\title{A critical Hardy-Rellich inequality}

\author{Hern{\'a}n Castro}
\address{Instituto de Matem{\'a}ticas, Universidad de Talca, Casilla 747, Talca, Chile}
\email{hcastro@utalca.cl}
\date{\today}

  \begin{abstract}
    In this work, we prove a critical version of a Hardy-Rellich type inequality. We show that for \(N\geq 1\) and for \(-N<a<1\), \(a\neq 0\) there exists a constant \(C_{N,a}>0\) such that
    \[
      \int_{\mathbb R^N}\left|\nabla\left(\frac{u(x)}{|x|}\right)\right|^N|x|^a\,\mathrm{d}x\leq C_{N,a}\int_{\mathbb R^N}\left|\Delta u(x)\right|^N |x|^a\,\mathrm{d}x,
    \]
    for any \(u\in C^\infty_c(\mathbb R^N\setminus\left\{0\right\})\).
  \end{abstract}

\keywords{Hardy inequality, Rellich inequality}
\subjclass[2020]{26D10, 35A23, 46E35}

\maketitle

\section{Introduction}

Suppose \(N\geq 1\) and that \(u\) belongs to \(C^\infty_c(\RR^N)\). The classical Hardy inequality \cite{HL52} says that for any \(1\leq p< N\) one has
\begin{equation}\label{hardy1}
  \int_{\RR^N}\frac{\abs{u(x)}^p}{{\abs x}^p}\dx\leq \pt{\frac{p}{N-p}}^p\int_{\RR^N} \abs{\nabla u(x)}^p\dx,
\end{equation}
and if in addition one supposes that \(u\in C^\infty_c(\RR^N\setminus \set{0})\) then \eqref{hardy1} can be improved in the sense that it holds for all \(p\neq N\), that is
\begin{equation}\label{hardyp}
  \int_{\RR^N}\frac{\abs{u(x)}^p}{{\abs x}^p}\dx\leq \abs{\frac{p}{N-p}}^p\int_{\RR^N} \abs{\nabla u(x)}^p\dx.
\end{equation}

If one allows higher order derivatives on the right hand side of \eqref{hardyp}, we have the so called Rellich inequalities \cite{Rell1956,Rell1969} initially proved by Rellich in \cite{Rell1956} and later generazlized by Davies and Hinz in \cite{DaHi98}: if \(N\geq 3\), \(1<p<N/2\), and \(u\in C^\infty_c(\RR^N\setminus \set{0})\) one has
\begin{equation}\label{rellich1}
  \int_{\RR^N}\abs{\frac{u(x)}{\abs x^{2}}}^p\dx\leq \pt{\frac{p^2}{N(N-2p)(p-1)}}^p\int_{\RR^N} \abs{\Delta u(x)}^p\dx.
\end{equation}
It is not our intention to give a complete survey of the results related to Hardy and Rellich inequalities as the past and recent literature is very rich on the subject; however we would like to point the interested reader to the collective works in \cite{KO90,Oka96,BM97,Mit00,GGM2004}, and the references therein. In those references one can find interesting remarks on the subject of best constants and improvements on the inequalities \eqref{hardyp} and \eqref{rellich1}.

Another family of inequalities are the so called Hardy-Rellich inequalities. These inequalities appear when one tries to estimate terms involving \(\nabla u\) on the left hand side of \eqref{rellich1}. For example, take \(u\in C^\infty_c(\RR^N\setminus\set{0})\), and apply \eqref{hardyp} to a suitable regularization of \(v=\abs{\nabla u}\), then one can obtain for \(p\neq N\) the inequality
\begin{equation}\label{rellichp-}
  \int_{\RR^N}\frac{\abs{\nabla u(x)}^p}{\abs x^p}\dx\leq C_{N,p}\int_{\RR^N} \abs{D^2u(x)}^p\dx,
\end{equation}
if, in addition, \(p>1\), we can use elliptic \(L^p\) theory in \eqref{rellichp-} to deduce that
\begin{equation}\label{rellichp}
  \int_{\RR^N}\frac{\abs{\nabla u(x)}^p}{\abs x^p}\dx\leq C_{N,p}\int_{\RR^N} \abs{\Delta u(x)}^p\dx.
\end{equation}
The constant \(C_{N,p}\) obtained by this method is not optimal, in fact, the optimal constant for \(p=2\) and \(N\geq 3\) is known (see \cite[Theorem 4.2]{TZ07}; see also \cite[Theorem 1.2]{Mus2014} and \cite{Caz2020}), that is
\begin{equation}\label{rellich2-5}
  \int_{\RR^N}\frac{\abs{\nabla u(x)}^2}{\abs x^2}\dx\leq C_N\int_{\RR^N} \abs{\Delta u(x)}^2\dx,
\end{equation}
where
\[
  C_N=
  \begin{dcases}
    \frac{4}{N^2}&\text{if }N\geq 5,\\
    \frac13&\text{if }N=4,\\
    \frac{36}{25}&\text{if }N=3.
  \end{dcases}
\]

Also, a generalization of \eqref{rellichp} was studied in \cite{Cos2009,DJSJ13,HaTa2025}. In \cite{Cos2009}, the author connected \eqref{rellichp} with the Caffarelli-Kohn-Nirenberg \cite{CKN1984} family of inequalities and he showed that for \(p=2\) one has
\[
  \int_{\RR^N}\frac{\abs{\nabla u(x)}^2}{\abs x^{a+b+1}}\dx
  \leq C_{N,a,b}
  \pt{\int_{\RR^N} \frac{\abs{\Delta u(x)}^2}{\abs x^{2a}}\dx}^\frac12
  \pt{\int_{\RR^N}\frac{\abs{\nabla u(x)}^2}{\abs x^{2b}}\dx}^\frac12,
\]
for suitable \(a,b\in\RR\). Later in \cite{DJSJ13} the above inequality was generalized for \(1<p<N\) and \((p-N)/(p-1)\leq a+b+1 \leq 0\) to obtain
\[
  \int_{\RR^N}\frac{\abs{\nabla u(x)}^p}{\abs{x}^{a+b+1}}\dx
  \leq C_{N,p,a,b}
  \pt{\int_{\RR^N}\frac{\abs{\Delta_p u}^p}{\abs{x}^{ap}}\dx}^{\frac1p}
  \pt{\int_{\RR^N}\frac{\abs{\nabla u(x)}^p}{\abs{x}^{\frac{bp}{p-1}}}\dx}^{1-\frac1p},
\]
where \(\Delta_p u\) denotes the \(p\)-Laplace operator. On the other hand, in \cite{HaTa2025} a weighted versión of \eqref{rellichp} is analyzed when \(p=2\) for vector fields \(\mathbf u\).

Observe that in the above inequalities, the case \(p=N\) is missing. This is no coincidence, as the classical Hardy inequalities \eqref{hardy1} and \eqref{hardyp} are known to fail for \(p=N\) unless one includes some correction term in the left-hand side (for instance, see \cite[Lemma 2.3]{ACR02}, and \cite{AS2002,Tak2015} where a logarithmic term is added to the left-hand side in the case of bounded domains). Additionally, one can easily see that from the fact that \eqref{hardy1} fails for \(p=N\), one can deduce that no inequality of the form \eqref{rellichp} can be proven for \(p=N\).

Similarly, it has been shown that a version of \eqref{rellich1} for the case of \(p=N=2\) cannot be proven; in fact one has (see \cite[Theorem 1 in \S 7]{Rell1969})
\begin{equation}\label{rellich2}
  \inf_{\substack{u\in C^2_c(\RR^2\setminus\set{0})\\ u\not\equiv 0}}\frac{\displaystyle\int_{\RR^2} \abs{\Delta u(x)}^2\dx}{\displaystyle\int_{\RR^2}\frac{\abs{u(x)}^2}{\abs x^{4}}\dx}=0.
\end{equation}
However, and observation from \cite{CaMu2012} tells us that, among other interesting things, the infimum in \eqref{rellich2} can be made positive and explicit if one restricts the set of functions to the set of \emph{radially symmetric functions}.

The above remarks tell us that in order to obtain a Hardy-Rellich type inequality for the case \(p=N\) one could restrict the set of functions to a particular class, or perhaps one could modify the left-hand side of the respective inequality. A related approach comes from studying the difference between the Hardy term \(\frac{\nabla u}{\abs x}\) and the Rellich term \(\frac{u}{\abs x^2}\). In \cite{CW10} we studied that phenomenon and we considered the difference
\[
  \frac{u'(x)}{x}-\frac{u(x)}{x^2}=\pt{\frac{u(x)}{x}}'
\]
and we were able to show that
\begin{equation}\label{1D-HardyRellich}
  \int_0^\infty\abs{\pt{\frac{u(x)}{x}}'}\dx\leq \int_0^\infty \abs{u''(x)}\dx
\end{equation}
despite the possible non-integrability of \(\frac{u'}{x}\) and \(\frac{u}{x^2}\) separately. This cancellation phenomenon was later generalized in \cite{CDW11,CDW11-2} to the following surprising Hardy inequality for \(p=1\): If \(d:\Omega\to\RR\) is a suitable regularization of \(\delta(x)=\dist(x,\partial\Omega)\) for a smooth domain \(\Omega\), then
\begin{equation}\label{hardy-bdry}
  \int_\Omega\abs{\nabla\pt{\frac{u(x)}{d(x)}}}\dx\leq C\norm{u}_{W^{2,1}(\Omega)},
\end{equation}
and a similar result for any \(u\in W^{m,1}_0(\Omega)\). Later, in \cite{CiNo2017} the authors generalized \eqref{hardy-bdry} and they obtained an inequality of the form
\[
  \norm{u d^{k-m}}_{W^{k,p}(\Omega,d^{p-1}\dx)}\leq C\norm{\nabla u}_{W^{m,p}(\Omega, d^{p-1}\dx)}
\]
for \(1\leq k\leq m-1\) and any \(u\in W^{m,p}_0(\Omega)\).

The above cancellation phenomena discovered in \cite{CW10,CDW11,CDW11-2} lead us to believe that a similar cancellation might occur when dealing with
\[
  \nabla\pt{\frac{u(x)}{\abs x}}=\frac{1}{\abs{x}}\nabla u(x)-\frac{u(x)}{\abs{x}^3}x,
\]
and this is the content of the following

\begin{conjecture}
  For every \(N\geq 1\) there exists a constant \(C_N>0\) such that
  \begin{equation}\label{hardy-rellichN}
    \int_{\RR^N}\abs{\nabla\pt{\frac{u(x)}{\abs x}}}^N\dx\leq C_N\int_{\RR^N}\abs{\Delta u(x)}^N\dx
  \end{equation}
  for all \(u\in C^\infty_c(\RR^N\setminus\set{0})\).
\end{conjecture}

However this conjecture is false as one can consider a suitable approximation of the funcion \(u(x)=x_1\) by functions in \(u_n\in C^\infty_c(\RR^N\setminus \set{0})\). In the limit it is not difficult to see that
\[
  \int_{\RR^N}\abs{\Delta u_n(x)}^N\dx \xrightarrow[n\to\infty]{} 0
\]
while
\[
  \int_{\RR^N}\abs{\nabla\pt{\frac{u_n(x)}{\abs x}}}^N\dx \xrightarrow[n\to\infty]{} +\infty.
\]

However, one can still prove something along the lines of \eqref{1D-HardyRellich} for higher dimensions if one adds suitable weights to each integral, as the following theorem shows.

\begin{theorem}\label{main-thm}
  Let \(N\geq 2\) and \(a\in (-\infty,0)\cup (0,1)\), then there exists a constant \(C_{N,a}>0\) such that
  \begin{equation}\label{hardy-rellichN2}
    \int_{\RR^N}\abs{\nabla\pt{\frac{u(x)}{\abs x}}}^N\abs{x}^{a}\dx
    \leq C_{N,a}
    \int_{\RR^N}\abs{D^2 u(x)}^N\abs{x}^a\dx,
  \end{equation}
  for all \(u\in C^\infty_c(\RR^N\setminus\set{0})\).
\end{theorem}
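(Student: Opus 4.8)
The plan is to reduce \eqref{hardy-rellichN2} to a one–dimensional weighted inequality by separating radial and spherical variables. Writing $x=r\omega$ with $r=\abs{x}>0$ and $\omega\in\Ss^{N-1}$, I would first observe that $\nabla\pt{u/\abs x}$ decomposes into a radial component $\partial_r\pt{u/r}$ and a tangential component $r^{-2}\nabla_\omega u$, and that, up to a dimensional constant, $\abs{D^2u}^N$ controls both $\abs{\partial_r^2 u}^N$ and the mixed/tangential second–order pieces. The tangential part of the left-hand side, $\int r^{-2}\abs{\nabla_\omega u}\,r^{a}r^{N-1}\D r\D\omega$, is the easier one: it is handled by the ordinary ($p=N$, but in the tangential variable the weight makes it subcritical) Hardy-type bound on the sphere combined with an $L^N$ bound coming from $D^2u$, so the real work is in the radial direction. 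For the radial direction the key identity is the one already highlighted in the excerpt,
\[
\pt{\frac{v(r)}{r}}'=\frac{v'(r)}{r}-\frac{v(r)}{r^{2}},
\]
and the crucial observation is that $\pt{v/r}'=\frac1r\pt{v'-\frac vr}=\frac1r\cdot\frac1r(rv)'-\dots$; more usefully, $r\pt{v/r}'=v'-v/r$ and $\pt{r\pt{v/r}'}'=v''-\pt{v/r}'$, so that $\pt{v/r}'$ satisfies a first-order linear ODE driven by $v''$. Solving this ODE and using the compact support away from the origin gives a pointwise representation of $\pt{v/r}'$ as an average of $v''$ against an explicit kernel.

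Concretely, I would fix $\omega$, set $v(r)=u(r\omega)$, and from $\pt{r(v/r)'}'=v''-(v/r)'$ derive that $w(r):=(v/r)'$ solves $w'+r^{-1}w=v''/r$ — wait, recomputing: $\pt{r w}'=v''-w$ gives $rw'+w=v''-w$, i.e. $w'+\tfrac{2}{r}w=\tfrac{v''}{r}$. Hence $w(r)=r^{-2}\int_0^r s\,v''(s)\D s$ (the constant of integration vanishes because $v$, and therefore $w$, is supported away from $0$ and at $\infty$). This is the cancellation mechanism: $\pt{u/\abs x}'$ in the radial variable is a bona fide averaging operator applied to $\partial_r^2 u$, with kernel $K(r,s)=s/r^{2}$ for $s<r$ and $0$ otherwise. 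The main step is then the weighted $L^N$–boundedness estimate
\[
\int_0^\infty\abs{\int_0^r \frac{s}{r^{2}}\,f(s)\D s}^{N} r^{a+N-1}\D r
\le C_{N,a}\int_0^\infty \abs{f(r)}^{N} r^{a+N-1}\D r,
\]
valid for $a<1$, which is exactly a Hardy-type inequality for the averaging kernel; it follows from the classical weighted Hardy inequality (Hardy–Littlewood–Pólya, or a direct Schur-test / Minkowski-integral-inequality argument) once one checks the balance condition on the exponents, and this is where the hypothesis $a<1$ enters — it guarantees the kernel estimate $\int_0^\infty K(r,s)\,r^{\alpha}\D r<\infty$ with $\alpha=(a+N-1)/N - $ (the dual exponent shift), i.e. integrability of $s^{-1+\text{(something)}}$ near $s$.

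Assembling: integrate the pointwise bound over $\Ss^{N-1}$, use Fubini/Minkowski to pass the $L^N(\D\omega)$ norm inside, bound $\partial_r^2 u(r\omega)$ pointwise by $\abs{D^2u(r\omega)}$, and combine with the (easier) tangential estimate to obtain \eqref{hardy-rellichN2}. I expect the main obstacle to be the borderline nature of the estimate: because $p=N$ the radial weight $r^{N-1}$ from the volume element sits exactly at the critical exponent where the unweighted Hardy inequality fails, so one must verify carefully that the extra factor $\abs{x}^a$ with $a<1$ shifts the exponent off the critical value in the favorable direction for the averaging kernel $s/r^2$ (as opposed to the non-averaging kernels that appear in the genuine Hardy/Rellich terms, which is precisely why those fail at $p=N$ while this one does not). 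A secondary technical point is the clean reduction from $\abs{D^2u}$ to the scalar radial second derivative plus controllable angular terms near $r=0$ and $r=\infty$, which is routine given the compact support in $\RR^N\setminus\set0$ but should be stated with the correct dimensional constants.
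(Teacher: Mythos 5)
Your proposal takes essentially the same route as the paper's proof: the same radial/tangential splitting of \(\nabla\pt{u/\abs x}\), the same representation of the radial part through the identity \(\pt{v/r}'=r^{-2}\int_0^r s\,v''(s)\D s\), and the same weighted \(L^N\) bound for the averaging kernel \(s/r^2\) with weight \(r^{N+a-1}\), which is exactly the paper's \cref{key-prop} (there proved by Jensen plus Tonelli, with \(a<1\) entering precisely as you indicate), followed by the pointwise bounds \(\abs{\partial_{rr}u}\le\abs{D^2u}\) and \(\abs{\tfrac1r\partial_r\nabla_{\Ss^{N-1}}u}\le\abs{D^2u}\). The only imprecision is your description of the tangential term as a ``Hardy-type bound on the sphere'': what is actually needed (and what the paper does) is the one-dimensional weighted Hardy inequality in the radial variable applied, for each fixed \(\omega\), to \(\abs{\nabla_{\Ss^{N-1}}u(r\omega)}\) with weight exponent \(-N+a-1\neq-1\) (hence non-critical), not an inequality in the angular variable.
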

and the following corollary

\begin{corollary}\label{main-corollary}
  If \(-N<a<1\) and \(a\neq 0\), then there exists a constant \(C_{N,a}>0\) such that
  \begin{equation}\label{hardy-rellichN3}
    \int_{\RR^N}\abs{\nabla\pt{\frac{u(x)}{\abs x}}}^N\abs{x}^{a}\dx
    \leq C_{N,a}
    \int_{\RR^N}\abs{\Delta u(x)}^N\abs{x}^a\dx,
  \end{equation}
  for all \(u\in C^\infty_c(\RR^N\setminus\set{0})\).
\end{corollary}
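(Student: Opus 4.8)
The plan is to deduce \Cref{main-corollary} from \Cref{main-thm} by replacing the full Hessian \(\abs{D^2u}\) on the right-hand side with the Laplacian \(\abs{\Delta u}\). Since \Cref{main-thm} already provides the weighted bound
\[
\int_{\RR^N}\abs{\nabla\pt{\tfrac{u}{\abs x}}}^N\abs{x}^a\dx\leq C_{N,a}\int_{\RR^N}\abs{D^2u}^N\abs{x}^a\dx,
\]
valid for \(a<1\), it suffices to establish a weighted Calderón--Zygmund estimate of the form
\[
\int_{\RR^N}\abs{D^2u}^N\abs{x}^a\dx\leq C\int_{\RR^N}\abs{\Delta u}^N\abs{x}^a\dx,\qquad u\in C^\infty_c(\RR^N\setminus\set{0}),
\]
for the range \(-N<a<1\). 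The restriction \(a>-N\) is exactly what is needed to keep \(\abs{x}^a\) locally integrable, hence an admissible Muckenhoupt-type weight; this is why the corollary has a narrower range than \Cref{main-thm}.

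First I would recall that the second-order Riesz transforms \(R_jR_k\), which satisfy \(\partial_{jk}u = -R_jR_k(\Delta u)\) for \(u\in C^\infty_c\), are Calderón--Zygmund operators bounded on \(L^p(\RR^N,w\,\dx)\) for every \(p\in(1,\infty)\) whenever \(w\) is an \(A_p\) Muckenhoupt weight. The second step is to verify that \(w(x)=\abs{x}^a\) belongs to the class \(A_N\) precisely when \(-N<a<N(N-1)\); since we are assuming \(a<1\le N(N-1)\) for \(N\ge1\) (and trivially handling any degenerate small-\(N\) cases by hand — note for \(N=1\) the statement is vacuous or elementary), the condition \(-N<a<1\) places \(\abs{x}^a\) safely inside \(A_N\). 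Applying the weighted bound componentwise to \(\partial_{jk}u=-R_jR_k(\Delta u)\) and summing over \(j,k\) yields the desired Hessian-to-Laplacian estimate with \(p=N\). Composing this with \Cref{main-thm} gives \eqref{hardy-rellichN3} with \(C_{N,a}\) equal to the product of the two constants.

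One technical point to handle with care is that \(u\in C^\infty_c(\RR^N\setminus\set{0})\) means \(\Delta u\in C^\infty_c(\RR^N\setminus\set{0})\subset L^N(\RR^N,\abs{x}^a\dx)\), and the Riesz transform identity \(\partial_{jk}u=-R_jR_k(\Delta u)\) holds pointwise because \(u\) is a genuine Schwartz-class function (compactly supported and smooth), so no distributional subtleties arise. The main obstacle — really the only place one must be a little careful — is pinning down the exact range of exponents for which \(\abs{x}^a\in A_N(\RR^N)\) and checking that \(-N<a<1\) lies inside it; this is a standard but slightly fiddly computation of the \(A_N\) characteristic of the power weight over balls both containing and avoiding the origin, and I would cite the classical reference (e.g. Stein, \emph{Harmonic Analysis}, or Grafakos) rather than reproduce it. Everything else is a direct chaining of the Hessian estimate from \Cref{main-thm} with weighted Calderón--Zygmund theory.
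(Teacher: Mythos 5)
Your proposal is correct and follows essentially the same route as the paper: the paper also deduces \cref{main-corollary} by combining \cref{main-thm} with a weighted Calder\'on--Zygmund estimate for \(\partial_{jk}u=-R_jR_k\Delta u\), using the fact that the power weight \(\abs{x}^a\) is a Muckenhoupt weight precisely when \(a>-N\) (the paper phrases this via the Coifman--Fefferman theorem and \(A_\infty=\bigcup_{q>1}A_q\), while you invoke the \(A_N\) class directly, which is an equivalent and if anything more precisely stated version of the same step). No gaps.
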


The constants in the above theorems are not given explicitly. This is no coincidence as the method used in the proof of the results do not give optimal constants. Therefore we have a natural open problem

\noindent\textbf{Open problem 1}: To find the best possible constant in both \eqref{hardy-rellichN} and \eqref{hardy-rellichN3}.

Additionally, in light of \cref{main-corollary} one could raise a couple of questions. For instance by looking at the work of Herbst \cite{Her1977} for the fractional Laplacian \((-\Delta)^s\) we know that
\[
  \int_{\RR^N}\abs{\frac{u(x)}{\abs x^{2s}}}^p\dx
  \leq C_{N,p,s}
  \int_{\RR^N} \abs{(-\Delta)^s u(x)}^p\dx
\]
holds for \(N>2ps\). Also, in the recent work \cite{DeNDj2024} the following generalization was obtained for \(N>(\theta+2s)p>0\)
\[
  \int_{\RR^N}\abs{\frac{u(x)}{\abs x^{\theta+2s}}}^p\dx
  \leq C_{N,p,s,\theta}
  \int_{\RR^N} \abs{\frac{(-\Delta)^s u(x)}{\abs x^{\theta}}}^p\dx.
\]

A natural question is then

\noindent\textbf{Open problem 2}: What can be said for \(N=(\theta+2s)p\) in the above inequalities??

Or in a more general setting

\noindent\textbf{Open problem 3}: Does a fractional Hardy-Rellich inequality of the form
\begin{equation}\label{open-p}
  \int_{\RR^N}\abs{\nabla\pt{\frac{u(x)}{\abs x^{\alpha}}}}^p\abs{x}^\beta\dx
  \leq C
  \int_{\RR^N} \abs{(-\Delta)^s u(x)}^p\abs x^{\gamma}\dx
\end{equation}
hold for suitable conditions on the parameters \(s,\alpha,\beta,\gamma\)? and if such an inequality is true, is there a critical exponent \(p\)?

The rest of this article is as follows: in \cref{sect-N=1} we generalize the idea from \cite{CW10} in such a way that it can be used in \cref{sect-proof-thm} to prove our main result.

\section{A result in dimension \texorpdfstring{\(N=1\)}{N=1}}\label{sect-N=1}

Recall the main result from \cite{CW10}: if \(u\in C^\infty_c(0,\infty)\) then
\begin{equation}\label{baby-esti}
  \int_0^\infty\abs{\pt{\frac{u(x)}{x}}'}\dx\leq \int_0^\infty \abs{u''(x)}\dx.
\end{equation}
The proof of \eqref{baby-esti} is quite simple once one realizes that any \(u\in C^\infty_c(0,\infty)\) satisfies the identity
\[
  \pt{\frac{u(x)}{x}}'=\frac{1}{x^2}\pt{xu'(x)-u(x)}=\frac{1}{x^2}\int_0^xsu''(s)\D s,
\]
and thus,
\[
  \int_0^\infty\abs{\pt{\frac{u(x)}{x}}'}\dx\leq \int_0^\infty \frac{1}{x^2}\int_0^xs\abs{u''(s)}\D s\dx=\int_0^\infty \abs{u''(s)}\D s
\]
thanks to Tonelli's theorem.

Observe that the above proof tells us that the operator
\[
  Tf(x)=\frac{1}{x^2}\int_0^x sf(s)\D s,
\]
is bounded in \(L^1(0,\infty)\) with \(\norm{Tf}_{L^1}\leq \norm{f}_{L^1}\), but in fact one has more
\begin{proposition}\label{key-prop}
  For \(a,b\in\RR\), define the operator \(T_{a,b}\) as
  \begin{equation}\label{operator-T}
    T_{a,b}f(x)=\frac{1}{x^a}\int_0^x s^bf(s)\D s.
  \end{equation}
  If \(p,\alpha\in\RR\) satisfy \(p\geq 1\) and \(p(a-1)>\alpha\), then \(T_{a,b}\) is bounded from \(L^p(x^\alpha\dx)\) to \(L^p(x^{\alpha-p(a-b-1)}\dx)\) with
  \[
    \int_0^\infty \abs{T_{a,b}f(x)}^px^\alpha\dx
    \leq \frac{1}{p(a-1)-\alpha}
    \int_0^\infty\abs{f}^px^{\alpha-p(a-b-1)}\dx.
  \]
\end{proposition}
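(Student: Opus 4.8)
The plan is to run, for an arbitrary $p\ge 1$, the Tonelli computation that gives the case $p=1$ recalled just above, augmented by a single application of Hölder's inequality to absorb the $p$-th power. First I would reduce to the case $f\ge 0$: on $(0,\infty)$ one has $\abs{T_{a,b}f}\le T_{a,b}\abs f$ pointwise, while the right-hand side of the claimed inequality is insensitive to the replacement $f\mapsto\abs f$, so no generality is lost. Once $f\ge 0$, every integrand appearing below is non-negative, so Tonelli's theorem will apply with no integrability hypothesis and all manipulations will be valid as (in)equalities in $[0,\infty]$ — the estimate being of interest, of course, only when its right-hand side is finite.

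The first step is to bound the inner integral: by Hölder's inequality on $(0,x)$, applied to the factorization $s^bf(s)=1\cdot s^bf(s)$ with conjugate exponents $p$ and $p'=p/(p-1)$ (for $p=1$ this step is trivial), one gets
\[
\pt{\int_0^x s^bf(s)\D s}^p\le x^{p-1}\int_0^x s^{bp}f(s)^p\D s ,
\]
whence, multiplying by $x^{\alpha-ap}$, the pointwise inequality $\abs{T_{a,b}f(x)}^px^\alpha\le x^{\alpha+p-1-ap}\int_0^x s^{bp}f(s)^p\D s$. The second step is to integrate this in $x$ over $(0,\infty)$ and interchange the order of integration by Tonelli:
\[
\int_0^\infty\abs{T_{a,b}f(x)}^px^\alpha\dx\le\int_0^\infty s^{bp}f(s)^p\pt{\int_s^\infty x^{\alpha+p-1-ap}\dx}\D s .
\]
The hypothesis $p(a-1)>\alpha$ is precisely what makes the inner integral converge at $+\infty$, and it evaluates to $s^{\alpha+p-ap}/\pt{p(a-1)-\alpha}$; inserting this and observing that $bp+\alpha+p-ap=\alpha-p(a-b-1)$ gives the stated inequality with constant $1/\pt{p(a-1)-\alpha}$.

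There is no genuinely hard step here; the only thing requiring care is the bookkeeping — verifying that the exponents collapse to $\alpha-p(a-b-1)$ and that no divergent integral is ever manipulated — and both are handled by the reduction to $f\ge 0$, which keeps all integrands non-negative and makes Tonelli's theorem unconditionally available. I would also note that this constant is not optimal: the same scheme with a different splitting in Hölder's inequality, or an application of Minkowski's integral inequality, yields the smaller constant $\pt{p/\pt{p(a-1)-\alpha+p-1}}^p$, but the cleaner value $1/\pt{p(a-1)-\alpha}$ is all that is needed in what follows.
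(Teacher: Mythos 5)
Your argument is correct and is essentially the paper's own proof: the Hölder step with the splitting $1\cdot s^bf(s)$ produces exactly the pointwise bound $\bigl(\int_0^x s^bf\,\mathrm{d}s\bigr)^p\le x^{p-1}\int_0^x s^{bp}|f|^p\,\mathrm{d}s$ that the paper obtains via Jensen's inequality applied to the average $\frac1x\int_0^x$, and the subsequent Tonelli interchange and evaluation of $\int_s^\infty x^{\alpha+p-1-ap}\,\mathrm{d}x$ under the hypothesis $p(a-1)>\alpha$ are identical. The reduction to $f\ge0$ and the remark on the non-optimal constant are fine but inessential additions.
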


\begin{proof}
  We use Jensen's inequality and Tonelli's theorem to write
  \begin{align*}
    \int_0^\infty \abs{T_{a,b}f(x)}^px^\alpha\dx
    &=
    \int_0^\infty \abs{\frac{1}{x^a}\int_0^x s^bf(s)\D s}^px^\alpha\dx\\
    &=
    \int_0^\infty \abs{\frac{1}{x}\int_0^x s^bf(s)\D s}^px^{\alpha-p(a-1)}\dx\\
    &\leq
    \int_0^\infty \int_0^x s^{bp}\abs{f(s)}^px^{\alpha-p(a-1)-1}\D s\dx\\
    &=
    \int_0^\infty \int_s^\infty s^{bp}\abs{f(s)}^qx^{\alpha-p(a-1)-1}\dx\D s\\
    &=\frac1{p(a-1)-\alpha}
    \int_0^\infty s^{\alpha-p(a-b-1)}\abs{f(s)}^p\D s\\
  \end{align*}
\end{proof}

From \cref{key-prop} we easily obtain this generalization of the result from \cite{CW10}
\begin{corollary}
  Suppose \(n,k\in\NN\cup\set{0}\), \(p\geq 1\) and that \(u\in C^\infty_c(0,\infty)\). If \(pn>\alpha\) then
  \[
    \int_0^\infty \abs{\frac{d^{n}}{dx^{n}}\pt{\frac{\frac{d^{k}u}{dx^{k}}(x)}{x}}}^px^\alpha\dx
    \leq
    \frac{1}{pn-\alpha}\int_0^\infty \abs{\frac{d^{n+k+1}u}{dx^{n+k+1}}(x)}^px^\alpha\dx
  \]
\end{corollary}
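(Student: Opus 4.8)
The plan is to reduce the statement to \cref{key-prop} by means of a pointwise representation formula for the \(n\)-th derivative of \(w:=v/x\), where \(v:=\dfrac{d^ku}{dx^k}\).

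First I would record the elementary reductions: since \(u\in C^\infty_c(0,\infty)\), also \(v=\dfrac{d^ku}{dx^k}\in C^\infty_c(0,\infty)\), the left-hand integrand is \(\left|\frac{d^n}{dx^n}\!\left(\frac{v(x)}{x}\right)\right|\), and \(v^{(n+1)}=\dfrac{d^{n+k+1}u}{dx^{n+k+1}}\) is exactly the right-hand integrand. So it suffices to prove, for every \(v\in C^\infty_c(0,\infty)\) and every \(n\geq 0\),
\[
\int_0^\infty \abs{\frac{d^n}{dx^n}\pt{\frac{v(x)}{x}}}^p x^\alpha\D x \leq \frac{1}{pn-\alpha}\int_0^\infty\abs{v^{(n+1)}(x)}^p x^\alpha\D x .
\]

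The key step is the identity
\[
\frac{d^n}{dx^n}\pt{\frac{v(x)}{x}}=\frac{1}{x^{n+1}}\int_0^x s^n v^{(n+1)}(s)\D s = T_{n+1,n}\big(v^{(n+1)}\big)(x),\qquad x>0,
\]
valid for all \(n\geq 0\), which I would prove by induction on \(n\). For \(n=0\) it is just \(v(x)=\int_0^x v'(s)\D s\), which holds because \(v\) vanishes near the origin. For the inductive step, differentiating the right-hand side produces \(-\frac{n+1}{x^{n+2}}\int_0^x s^n v^{(n+1)}(s)\D s+\frac{v^{(n+1)}(x)}{x}\); one then rewrites the last term using the integration-by-parts identity \(x^{n+1}v^{(n+1)}(x)=\int_0^x\big((n+1)s^n v^{(n+1)}(s)+s^{n+1}v^{(n+2)}(s)\big)\D s\) (again exploiting that \(v\) and its derivatives vanish near \(0\)), and the two terms proportional to \(\int_0^x s^n v^{(n+1)}\) cancel, leaving \(\frac{1}{x^{n+2}}\int_0^x s^{n+1}v^{(n+2)}(s)\D s\), as desired. (Alternatively one may expand \(\frac{d^n}{dx^n}(v/x)\) by Leibniz's rule and check the resulting binomial identity directly; this uses the same vanishing-at-\(0\) fact to match the "constant of integration".)

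Finally, I would invoke \cref{key-prop} with \(a=n+1\) and \(b=n\), so that \(a-b-1=0\) — hence the target weight equals \(x^\alpha\) — and the hypothesis \(p(a-1)>\alpha\) becomes precisely \(pn>\alpha\), while the constant \(\frac{1}{p(a-1)-\alpha}\) becomes \(\frac{1}{pn-\alpha}\). Applying it to \(f=v^{(n+1)}\in L^p(x^\alpha\D x)\) and substituting the identity above gives the inequality, and then substituting back \(v^{(n+1)}=\dfrac{d^{n+k+1}u}{dx^{n+k+1}}\) completes the proof. The only real obstacle is the second step: getting the representation formula right, and in particular keeping careful track of the boundary terms in the repeated integrations by parts; once that formula is established the rest is a direct application of \cref{key-prop}.
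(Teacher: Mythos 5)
Your proposal is correct and follows essentially the same route as the paper: establish by induction the representation \(\frac{d^{n}}{dx^{n}}\big(\frac{v(x)}{x}\big)=T_{n+1,n}\big(v^{(n+1)}\big)\), apply it with \(v=\frac{d^{k}u}{dx^{k}}\), and invoke \cref{key-prop} with \(a=n+1\), \(b=n\), which gives exactly the weight \(x^{\alpha}\) and the constant \(\frac{1}{pn-\alpha}\). The only difference is that you spell out the inductive step and the boundary terms explicitly, which the paper leaves as ``a direct induction argument.''
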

\begin{proof}
  One has \(\pt{\frac{u(x)}{x}}'=T_{2,1}(u'')\) from where a direct induction argument tells us that
  \[
    \frac{d^{n}}{dx^{n}}\pt{\frac{u(x)}{x}}=T_{n+1,n}\pt{\frac{d^{n+1}u}{dx^{n+1}}}.
  \]
  If we apply the above to \(\frac{d^{k}u}{dx^{k}}\) instead of \(u\) we get
  \[
    \frac{d^{n}}{dx^{n}}\pt{\frac{\frac{d^{k}u}{dx^{k}}(x)}{x}}=T_{n+1,n}\pt{\frac{d^{n+k+1}u}{dx^{n+k+1}}}
  \]
  and the result follows from the boundedness of \(T_{n+1,n}\).
\end{proof}

\section{The main result}\label{sect-proof-thm}

The key ingredient in the proof of the main theorem is the following observation: in spherical coordinates \(x=r\omega\), \(r>0\) and \(\omega\in \Ss^{N-1}\), one can write
\begin{equation*}
  \nabla u(r\omega)=\partial_r u(r\omega)\omega+\frac1r\nabla_{\Ss^{N-1}}u(r\omega),
\end{equation*}
which after differentiating in the \(r\) variable gives
\begin{equation*}
  \partial_r\nabla u(r\omega)=D^2u(r\omega)\omega=\partial_{rr}u(r\omega)\omega+\partial_r\pt{\frac1r\nabla_{\Ss^{N-1}}u(r\omega)}.
\end{equation*}
Since the vectors \(\omega\) and \(\nabla_{\Ss^{N-1}}u(r\omega)\) are orthogonal, we have
\begin{equation}\label{key-ineq}
  \abs{D^2u(r\omega)}^2\geq \abs{\partial_{rr}u(r\omega)}^2+\abs{\partial_r\pt{\frac1r\nabla_{\Ss^{N-1}}u(r\omega)}}^2.
\end{equation}

\begin{proof}[Proof of \cref{main-thm}]
  we have
  \begin{align*}
    \abs{\nabla\pt{\frac{u(x)}{\abs x}}}^2&=\abs{\frac{1}{\abs{x}}\nabla u(x)-\frac{u(x)}{\abs{x}^3}x}^2\\
    &=\frac{\abs{\nabla u(x)}^2}{\abs{x}^2}+\frac{\abs{u(x)}^2}{\abs{x}^4}-2\frac{u(x)}{\abs{x}^4}x\cdot\nabla u(x)\\
    &=\pt{\frac{\abs{\nabla u(x)}^2}{\abs{x}^2}-\frac{\abs{x\cdot\nabla u(x)}^2}{\abs{x}^4}}+\frac{1}{\abs{x}^4}\abs{x\cdot\nabla u(x)-u(x)}^2.
  \end{align*}

  Writing the above in spherical coordinates \(x=r\omega\) gives
  \[
    \abs{r\omega\cdot \nabla u(r\omega)}^2=\abs{r\omega}^2\abs{\partial_r u(r\omega)}^2+\abs{\nabla_{\Ss^{N-1}}u(r\omega)}^2,
  \]
  so that
  \begin{align*}
    \abs{\nabla\pt{\frac{u(x)}{\abs x}}}^2
    &=\pt{\frac{\abs{\nabla u(x)}^2}{\abs{x}^2}-\frac{\abs{x\cdot\nabla u(x)}^2}{\abs{x}^4}}
    +\frac{1}{\abs{x}^4}\abs{x\cdot\nabla u(x)-u(x)}^2\\
    &=\frac1{r^4}\abs{\nabla_{\Ss^{N-1}} u(r\omega)}^2
    +\frac{1}{r^4}\abs{r\partial_r u(r\omega)-u(r\omega)}^2.
  \end{align*}
  Using the convexity of the function \(t\mapsto t^{\frac{N}2}\) for \(N\geq 2\) we deduce that
  \begin{equation}\label{first-step}
    \abs{\nabla\pt{\frac{u(x)}{\abs x}}}^N
    \leq C\spt{
      \frac1{r^{2N}}\abs{\nabla_{\Ss^{N-1}} u(r\omega)}^N
      +
      \pt{\frac{1}{r^{2}}\abs{r\partial_r u(r\omega)-u(r\omega)}}^N
    }
  \end{equation}
  for some constant \(C>0\) depending solely on the dimension \(N\).

  Hence, if \(a<1\) we can estimate the ``radial'' term using \cref{key-prop}, this time for \(p=N\), \(a=2,~b=1\) and \(\alpha=N+a-1\)
  \begin{align*}
    \int_{\Ss^{N-1}}\int_0^\infty
    &\pt{\frac{1}{r^2}\abs{r\partial_ru(r\omega)-u(r\omega)}}^Nr^{N+a-1}\D r\D\omega\\
    &=
    \int_{\Ss^{N-1}}\int_0^\infty
    r^{N+a-1}\abs{T_{2,1}(\partial_{rr}u)(r\omega)}^N\D r\D\omega\\
    &\leq
    \frac{1}{1-a}
    \int_{\Ss^{N-1}}\int_0^\infty
    \abs{\partial_{rr}u(s\omega)}^N s^{N+a-1}\D s\D\omega\\
    &\leq C
    \int_{\RR^N}
    \abs{D^2 u(x)}^N\abs{x}^a\dx,
  \end{align*}
  for some constant \(C>0\) depending on the dimension \(N\) and \(a<1\). Here we have use the inequality \eqref{key-ineq} to estimate the last integral.

  For the ``angular'' term we use the following version of the one dimensional Hardy inequality: if \(f\in C^\infty_c(0,\infty)\), \(a\neq 0\) and \(p>1\) we have
  \begin{equation}
    \int_0^\infty r^{a-1}\abs{f(r)}^p\D r\leq \abs{\frac{p}{a}}^{p}\int_0^\infty r^{p+a-1}\abs{f'(r)}^p\D r,
  \end{equation}
  which we apply to the function
  \(f_\ve(r)=(\frac1{r^2}\abs{\nabla_{\Ss^{N-1}}u(r\omega)}^2+\ve^2)^{\frac12}\) where \(\omega\in \Ss^{N-1}\) is fixed. Observe that one has
  \[
    \abs{f_\ve'(r)}\leq \abs{\partial_r\pt{\frac1r\nabla_{\Ss^{N-1}}u(r\omega)}}
  \]
  hence we get
  \[
    \int_0^\infty r^{a-1}\abs{f_\ve(r)}^N\D r\leq C_{a,N}\int_0^\infty r^{N+a-1}\abs{f_\ve'(r)}^N\D r
  \]
  and from Fatou's lemma we conclude that
  \[
    \int_0^\infty r^{a-N-1}\abs{\nabla_{\Ss^{N-1}}u(r\omega)}^N\D r\leq C_{a,N}\int_0^\infty r^{N+a-1}\abs{\partial_r\pt{\frac1r\nabla_{\Ss^{N-1}}u(r\omega)}}^N\D r
  \]
  for each \(\omega\in \Ss^{N-1}\). Now we write
  \begin{align*}
    \int_{\Ss^{N-1}}\int_0^\infty
    \frac1{r^{2N}}&\abs{\nabla_{\Ss^{N-1}} u(r\omega)}^Nr^{N+a-1}\D r\D\omega\\
    &=
	\int_{\Ss^{N-1}}\int_0^\infty
    r^{a-N-1}\abs{\nabla_{\Ss^{N-1}} u(r\omega)}^N\D r\D\omega\\
    &\leq C_{a,N}
    \int_{\Ss^{N-1}}\int_0^\infty
    \abs{\partial_r\pt{\frac1r\nabla_{\Ss^{N-1}} u(r\omega)}}^Nr^{N+a-1}\D r\D\omega\\
    &\leq C_{a,N}
    \int_{\Ss^{N-1}}\int_0^\infty
    \abs{D^2 u(r\omega)}^Nr^{N+a-1}\D r\D\omega\\
    &= C_{a,N}
    \int_{\RR^N}
    \abs{D^2 u(x)}^N\abs{x}^a\dx,
  \end{align*}
  thanks to \eqref{key-ineq} and the change of variables \(x=r\omega\).
\end{proof}

Finally, to prove \cref{main-corollary} we recall a result about the boundedness of singular integrals in a weighted setting. In the classical work of Coifman and Fefferman \cite{CoiFe1974}, it is shown that if \(w\) a Muckenhoupt weight, then singular integrals operators are bounded in \(L^p(w\dx)\), more precisely

\begin{theorem}[Theorem III in \cite{CoiFe1974}]
  Suppose that \(w\in A_\infty=\bigcup_{q>1}A_q\), that is, there exist constants \(C,\delta>0\) such that for every cube \(Q\) and measurable \(E\subset Q\) one has
  \[
    \frac{w(E)}{w(Q)}\leq C\pt{\frac{\abs{E}}{\abs{Q}}}^\delta,
  \]
  then
  \[
    \int_{\RR^N}\abs{Tf(x)}^pw(x)\dx\leq C_{N,p,w}\int_{\RR^N}\abs{f(x)}^pw(x)\dx,
  \]
  where \(Tf=K\ast f\), and \(K\) is a standard convolution kernel.
\end{theorem}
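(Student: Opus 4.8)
This is the weighted $L^p$ boundedness of a Calderón–Zygmund operator, and the approach I would take is the classical good-$\lambda$ method of Coifman and Fefferman. The plan is to pass first to the maximal truncated operator $T^*f(x)=\sup_{\ve>0}\abs{\int_{\abs{x-y}>\ve}K(x-y)f(y)\D y}$, which dominates $\abs{Tf}$ pointwise, and to compare it with the Hardy–Littlewood maximal function $M$. Throughout I would take $f$ bounded with compact support, for which $\norm{T^*f}_{L^p(w\dx)}$ is a priori finite (using the unweighted $L^2$ boundedness built into the notion of a standard kernel, the decay $\abs{K(x)}\leq C\abs{x}^{-N}$, and the local integrability of $w$); the general case follows by density. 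I would also note at the outset that the good-$\lambda$ argument uses only the $A_\infty$ property and produces $\norm{Tf}_{L^p(w\dx)}\leq C\norm{Mf}_{L^p(w\dx)}$, whereas the final step --- bounding $M$ on $L^p(w\dx)$ --- invokes Muckenhoupt's theorem and hence requires $w\in A_p\subset A_\infty$, which is the relevant range for the power weights used below.

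The heart of the matter is a local good-$\lambda$ inequality with respect to Lebesgue measure. Fixing $\lambda>0$, I would decompose the open level set $\set{T^*f>\lambda}$ into a Whitney family of cubes with disjoint interiors $\set{Q_j}$ and prove that, for a small parameter $\gamma>0$,
\[
\abs{\set{x\in Q_j:\ T^*f(x)>2\lambda,\ Mf(x)\leq\gamma\lambda}}\leq C\gamma\,\abs{Q_j}
\]
for every $j$. This is where the standard-kernel hypotheses enter, through the size bound $\abs{K(x)}\leq C\abs{x}^{-N}$ and the Hörmander smoothness condition $\int_{\abs{x}>2\abs{y}}\abs{K(x-y)-K(x)}\dx\leq C$: one splits $f$ into its parts near and far from $Q_j$ and compares the truncated integral over $Q_j$ with its value at a point of $Q_j$ where $Mf\leq\gamma\lambda$. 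I expect this cube-by-cube estimate to be the main obstacle, as it carries all the cancellation and regularity of the kernel.

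With the local estimate in hand, the $A_\infty$ hypothesis is exactly what transfers it from Lebesgue to $w$: applying $\frac{w(E)}{w(Q)}\leq C\pt{\frac{\abs{E}}{\abs{Q}}}^\delta$ on each $Q_j$ and summing over the (essentially disjoint) Whitney cubes gives
\[
w\set{T^*f>2\lambda,\ Mf\leq\gamma\lambda}\leq C\gamma^\delta\,w\set{T^*f>\lambda}.
\]
Multiplying by $p\lambda^{p-1}$, integrating in $\lambda$, and using the inclusion $\set{T^*f>2\lambda}\subset\set{T^*f>2\lambda,\ Mf\leq\gamma\lambda}\cup\set{Mf>\gamma\lambda}$ together with the obvious rescalings yields
\[
2^{-p}\norm{T^*f}_{L^p(w\dx)}^p\leq C\gamma^\delta\norm{T^*f}_{L^p(w\dx)}^p+C\gamma^{-p}\norm{Mf}_{L^p(w\dx)}^p.
\]
Choosing $\gamma$ small enough that $C\gamma^\delta<2^{-p-1}$ lets me absorb the first term on the right --- here the a priori finiteness is essential --- and leaves $\norm{T^*f}_{L^p(w\dx)}\leq C\norm{Mf}_{L^p(w\dx)}$.

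Finally, since $w\in A_p$, Muckenhoupt's maximal theorem gives $\norm{Mf}_{L^p(w\dx)}\leq C\norm{f}_{L^p(w\dx)}$. Combining this with $\abs{Tf}\leq T^*f$ yields
\[
\norm{Tf}_{L^p(w\dx)}\leq\norm{T^*f}_{L^p(w\dx)}\leq C\norm{Mf}_{L^p(w\dx)}\leq C\norm{f}_{L^p(w\dx)},
\]
which is the asserted estimate.
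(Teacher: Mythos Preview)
The paper does not supply a proof of this statement at all: it is quoted verbatim as Theorem~III of Coifman--Fefferman and used as a black box to deduce the weighted Calder\'on--Zygmund estimate in \cref{CZ-estimate}. Your outline is precisely the classical good-$\lambda$ argument from that reference, so in that sense your approach coincides with the one the paper is implicitly invoking.

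One remark worth making explicit: the theorem as recorded in the paper is stated a bit loosely. The $A_\infty$ hypothesis alone yields only $\norm{T^*f}_{L^p(w\dx)}\leq C\norm{Mf}_{L^p(w\dx)}$; passing from $Mf$ to $f$ genuinely requires $w\in A_p$, as you correctly point out. For the application at hand this is harmless, since the power weights $\abs{x}^a$ with $-N<a<N(p-1)$ lie in $A_p$, and you flagged this. So your proposal is both correct and faithful to the source the paper cites.
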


In particular, for each \(i\in\set{1,\ldots, N}\) the kernel \(K_i(x)=C_N   \frac{x_i}{\abs{x}^{N+1}}\) satisfies the standard conditions of size, smoothness and cancellation and the operator \(T_i u=K_i\ast u\) is the Riesz transform \(R_i\), in particular one has
\[
  \partial_{ij}u=-R_iR_j\Delta u,
\]
and as a consequence we readily obtain
\begin{corollary}
  If \(w\in A_\infty\), there exists a constant \(C=C(N,p,w)>0\) such that
  \[
    \int_{\RR^N}\abs{\partial_{ij}u(x)}^pw(x)\dx
    \leq C
    \int_{\RR^N}\abs{\Delta u}^pw(x)\dx,
  \]
  for every \(u\in C^\infty_c(\RR^N)\).
\end{corollary}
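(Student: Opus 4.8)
The plan is to deduce the estimate from the operator identity $\partial_{ij}u=-R_iR_j\Delta u$ together with two applications of the weighted Calderón–Zygmund theorem quoted above.

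\emph{Step 1 (the identity).} First I would record that every $u\in C^\infty_c(\RR^N)$ satisfies $\partial_{ij}u=-R_iR_j\Delta u$. On the Fourier side this is immediate: with the usual normalization one has $\widehat{\partial_{ij}u}(\xi)=-\xi_i\xi_j\,\widehat u(\xi)$, $\widehat{\Delta u}(\xi)=-\abs{\xi}^2\widehat u(\xi)$, and $\widehat{R_kg}(\xi)=-i\,\xi_k\abs{\xi}^{-1}\widehat g(\xi)$, so that
\[
\widehat{R_iR_j\Delta u}(\xi)=\pt{-i\frac{\xi_i}{\abs\xi}}\pt{-i\frac{\xi_j}{\abs\xi}}\pt{-\abs\xi^2\widehat u(\xi)}=\xi_i\xi_j\widehat u(\xi)=-\widehat{\partial_{ij}u}(\xi).
\]
Since $u$ is Schwartz, all the objects involved are classical functions and the identity holds pointwise.

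\emph{Step 2 (the kernels are standard).} Next I would verify that the convolution kernel $K_i(x)=C_N\,x_i\abs{x}^{-N-1}$ of $R_i$ satisfies the hypotheses of the quoted theorem: it is homogeneous of degree $-N$, whence $\abs{K_i(x)}\leq C\abs{x}^{-N}$ (size) and $\abs{\nabla K_i(x)}\leq C\abs{x}^{-N-1}$ (smoothness, which in particular gives the Hörmander condition), and it has vanishing mean on spheres, $\int_{\Ss^{N-1}}K_i(\omega)\D\omega=0$ (cancellation). Hence $R_i$, and likewise $R_j$, falls within the scope of the cited theorem, and is therefore bounded on $L^p(w\dx)$ for every $w\in A_\infty$ and every $1<p<\infty$.

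\emph{Step 3 (conclusion).} Given $u\in C^\infty_c(\RR^N)$, I would set $f=\Delta u\in C^\infty_c(\RR^N)$. Since $w\in A_\infty=\bigcup_{q>1}A_q$ is locally integrable and of at most polynomial growth at infinity, $f\in L^p(w\dx)$; one application of the theorem to $R_j$ then gives $R_jf\in L^p(w\dx)$, and a second application, now to $R_i$ acting on $R_jf$, yields
\[
\int_{\RR^N}\abs{\partial_{ij}u}^pw\dx=\int_{\RR^N}\abs{R_iR_jf}^pw\dx\leq C\int_{\RR^N}\abs{R_jf}^pw\dx\leq C\int_{\RR^N}\abs{f}^pw\dx=C\int_{\RR^N}\abs{\Delta u}^pw\dx,
\]
with $C=C(N,p,w)$. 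I do not expect a genuine obstacle here: the corollary is essentially a two-line consequence of the identity $\partial_{ij}=-R_iR_j\Delta$ and the cited theorem, the only point deserving a word of care being the a priori membership $R_jf\in L^p(w\dx)$ needed before iterating the operator bound — and for test functions $f$ and $A_\infty$ weights $w$ this follows at once from the decay of $R_jf$ at infinity together with the controlled growth of $A_\infty$ weights.
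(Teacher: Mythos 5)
Your proposal is correct and follows essentially the same route as the paper: the identity \(\partial_{ij}u=-R_iR_j\Delta u\), the observation that the Riesz kernels \(K_i(x)=C_N x_i\abs{x}^{-N-1}\) satisfy the standard size, smoothness and cancellation conditions, and then the quoted Coifman--Fefferman theorem applied to the composition \(R_iR_j\). The only difference is that you spell out details the paper leaves implicit (the Fourier-side verification of the identity and the a priori membership \(R_j(\Delta u)\in L^p(w\dx)\) before iterating), which is fine.
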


In particular, when \(w(x)=\abs{x}^a\) we get

\begin{corollary}\label{CZ-estimate}
  If \(a>-N\), then there exists a constant \(C=C(a,p,N)>0\) such that
  \[
    \int_{\RR^N}\abs{D^2 u(x)}^N\abs{x}^a\dx
    \leq C_{N,p,w}
    \int_{\RR^N}\abs{\Delta u}^N\abs{x}^a\dx
  \]
  for every \(u\in C^\infty_c(\RR^N)\).
\end{corollary}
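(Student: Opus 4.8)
The plan is to obtain \cref{CZ-estimate} directly from the preceding corollary with the choice $w(x)=\abs{x}^a$, so the only point requiring care is the membership $\abs{x}^a\in A_\infty$. I would first recall the classical characterization of power weights: for fixed $1<q<\infty$ one has $\abs{x}^a\in A_q(\RR^N)$ if and only if $-N<a<N(q-1)$. On any ball not meeting the origin the weight is comparable to a constant, while for a ball $B_r(0)$ one computes $\frac{1}{\abs{B_r}}\int_{B_r}\abs{x}^a\dx\simeq r^a$ (finite only when $a>-N$) and $\frac{1}{\abs{B_r}}\int_{B_r}\abs{x}^{-a/(q-1)}\dx\simeq r^{-a/(q-1)}$ (finite only when $a<N(q-1)$), and the product of the first quantity with the $(q-1)$-th power of the second is $1$ uniformly in $r$. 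Taking the union over $q$, it follows that $\abs{x}^a\in A_\infty=\bigcup_{q>1}A_q$ exactly when $a>-N$: for $-N<a\le 0$ the weight already lies in $A_1$, while for $a>0$ any $q>1+a/N$ works.

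I would then fix $a>-N$, set $w(x)=\abs{x}^a\in A_\infty$, and apply the corollary that immediately precedes the statement, with $p=N$, to produce a constant $C=C(N,a)>0$ such that
\[
\int_{\RR^N}\abs{\partial_{ij}u(x)}^N\abs{x}^a\dx\le C\int_{\RR^N}\abs{\Delta u(x)}^N\abs{x}^a\dx
\]
for all $u\in C^\infty_c(\RR^N)$ and all $i,j\in\set{1,\ldots,N}$. Since $\abs{D^2u(x)}^2=\sum_{i,j}\abs{\partial_{ij}u(x)}^2$ and all norms on $\RR^{N^2}$ are equivalent, there is a purely dimensional constant $c_N$ (one may take $c_N=N^{N-2}$, via $\norm{v}_2\le (N^2)^{1/2-1/N}\norm{v}_N$ on $\RR^{N^2}$) with $\abs{D^2u(x)}^N\le c_N\sum_{i,j}\abs{\partial_{ij}u(x)}^N$ pointwise. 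Summing the displayed inequality over the $N^2$ index pairs and multiplying by $c_N$ then gives the assertion of \cref{CZ-estimate}, with constant $c_N N^2 C$.

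I do not anticipate a genuine obstacle here. The entire substance is the observation that, although $\abs{x}^a\in A_q$ only for $a$ in the bounded window $(-N,N(q-1))$ with $q$ fixed, membership in $A_\infty$ persists throughout the range $a>-N$, which is precisely the hypothesis of the corollary. Once this is noted, \cref{CZ-estimate} is a one-line consequence of the preceding weighted Calderón–Zygmund estimate together with the elementary comparison between $\abs{D^2u}$ and the individual second-order partial derivatives.
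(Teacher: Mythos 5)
Your argument is correct and is essentially the paper's own proof: the paper likewise deduces \(\abs{x}^a\in A_\infty\) for \(a>-N\) from the classical characterization \(\abs{x}^a\in A_q\iff -N<a<N(q-1)\) (which it simply cites rather than sketches as you do) together with \(A_\infty=\bigcup_{q>1}A_q\), and then applies the preceding weighted Riesz-transform corollary. Your only addition is to make explicit the elementary summation over the index pairs \((i,j)\) relating \(\abs{D^2u}\) to the individual \(\partial_{ij}u\), which the paper leaves implicit.
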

\begin{proof}
  This follows because the weight \(w(x)=\abs{x}^a\in A_q\) if and only if \(-N<a<N(q-1)\) (\cite[p. 298]{HeKiMa2006}), and because \(A_\infty=\bigcup_{q>1}A_q\) (\cite[Lemma 15.8]{HeKiMa2006}) we conclude that \(w\in A_\infty\) if and only if \(a>-N\).
\end{proof}

In conclusion, combining \cref{CZ-estimate} and \cref{main-thm} readily yields \cref{main-corollary}.

\section*{Acknowledgements}

I would like to thank Shuto Hasegawa for pointing out a mistake in a previous version of this manuscript.

\bibliographystyle{amsplain}

\begin{bibdiv}
  \begin{biblist}

    \bib{ACR02}{article}{
      author={Adimurthi},
      author={Chaudhuri, Nirmalendu},
      author={Ramaswamy, Mythily},
      title={An improved {H}ardy-{S}obolev inequality and its application},
      date={2002},
      ISSN={0002-9939},
      journal={Proc. Amer. Math. Soc.},
      volume={130},
      number={2},
      pages={489\ndash 505 (electronic)},
      url={http://dx.doi.org/10.1090/S0002-9939-01-06132-9},
      review={\MR{1862130 (2002j:35232)}},
    }

    \bib{AS2002}{article}{
      author={Adimurthi},
      author={Sandeep, K.},
      title={Existence and non-existence of the first eigenvalue of the
      perturbed {H}ardy-{S}obolev operator},
      date={2002},
      ISSN={0308-2105},
      journal={Proc. Roy. Soc. Edinburgh Sect. A},
      volume={132},
      number={5},
      pages={1021\ndash 1043},
      url={http://dx.doi.org/10.1017/S0308210500001992},
      review={\MR{1938711 (2003i:35210)}},
    }

    \bib{BM97}{article}{
      author={Brezis, Haim},
      author={Marcus, Moshe},
      title={Hardy's inequalities revisited},
      date={1997},
      ISSN={0391-173X},
      journal={Ann. Scuola Norm. Sup. Pisa Cl. Sci. (4)},
      volume={25},
      number={1-2},
      pages={217\ndash 237 (1998)},
      url={http://www.numdam.org/item?id=ASNSP_1997_4_25_1-2_217_0},
      note={Dedicated to Ennio De Giorgi},
      review={\MR{1655516 (99m:46075)}},
    }

    \bib{CKN1984}{article}{
      author={Caffarelli, Luis~A.},
      author={Kohn, Robert~V.},
      author={Nirenberg, Louis},
      title={First order interpolation inequalities with weights},
      date={1984},
      ISSN={0010-437X},
      journal={Compositio Math.},
      volume={53},
      number={3},
      pages={259\ndash 275},
      review={\MR{768824 (86c:46028)}},
    }

    \bib{CaMu2012}{article}{
      author={Caldiroli, Paolo},
      author={Musina, Roberta},
      title={Rellich inequalities with weights},
      date={2012},
      ISSN={0944-2669},
      journal={Calc. Var. Partial Differential Equations},
      volume={45},
      number={1-2},
      pages={147\ndash 164},
      url={http://dx.doi.org/10.1007/s00526-011-0454-3},
      review={\MR{2957654}},
    }

    \bib{CDW11}{article}{
      author={Castro, Hern{\'a}n},
      author={D{\'a}vila, Juan},
      author={Wang, Hui},
      title={A {H}ardy type inequality for {\(W^{2,1}_0(\Omega)\)} functions},
      date={2011},
      ISSN={1631-073X},
      journal={C. R. Math. Acad. Sci. Paris},
      volume={349},
      number={13-14},
      pages={765\ndash 767},
      url={http://dx.doi.org/10.1016/j.crma.2011.06.026},
      review={\MR{2825937 (2012g:35009)}},
    }

    \bib{CDW11-2}{article}{
      author={Castro, Hern{\'a}n},
      author={D{\'a}vila, Juan},
      author={Wang, Hui},
      title={A {H}ardy type inequality for {\(W^{m,1}_0(\Omega)\)} functions},
      date={2013},
      ISSN={1435-9855},
      journal={J. Eur. Math. Soc. (JEMS)},
      volume={15},
      number={1},
      pages={145\ndash 155},
      url={http://dx.doi.org/10.4171/JEMS/357},
      review={\MR{2998831}},
    }

    \bib{CW10}{article}{
      author={Castro, Hern{\'a}n},
      author={Wang, Hui},
      title={A {H}ardy type inequality for {\(W^{m,1}(0,1)\)} functions},
      date={2010},
      ISSN={0944-2669},
      journal={Calc. Var. Partial Differential Equations},
      volume={39},
      number={3-4},
      pages={525\ndash 531},
      url={http://dx.doi.org/10.1007/s00526-010-0322-6},
      review={\MR{2729310 (2011j:46051)}},
    }

    \bib{Caz2020}{article}{
      author={Cazacu, Cristian},
      title={A new proof of the {H}ardy-{R}ellich inequality in any
      dimension},
      date={2020},
      ISSN={0308-2105,1473-7124},
      journal={Proc. Roy. Soc. Edinburgh Sect. A},
      volume={150},
      number={6},
      pages={2894\ndash 2904},
      url={https://doi.org/10.1017/prm.2019.50},
      review={\MR{4190094}},
    }

    \bib{CiNo2017}{article}{
      author={Cianchi, Andrea},
      author={Ioku, Norisuke},
      title={Canceling effects in higher-order {H}ardy-{S}obolev
      inequalities},
      date={2017},
      ISSN={0944-2669,1432-0835},
      journal={Calc. Var. Partial Differential Equations},
      volume={56},
      number={2},
      pages={Paper No. 31, 18},
      url={https://doi.org/10.1007/s00526-017-1112-1},
      review={\MR{3610173}},
    }

    \bib{CoiFe1974}{article}{
      author={Coifman, R.~R.},
      author={Fefferman, C.},
      title={Weighted norm inequalities for maximal functions and singular
      integrals},
      date={1974},
      ISSN={0039-3223,1730-6337},
      journal={Studia Math.},
      volume={51},
      pages={241\ndash 250},
      url={https://doi.org/10.4064/sm-51-3-241-250},
      review={\MR{358205}},
    }

    \bib{Cos2009}{article}{
      author={Costa, David~G.},
      title={On {H}ardy-{R}ellich type inequalities in {\(\mathbb R^N\)}},
      date={2009},
      ISSN={0893-9659,1873-5452},
      journal={Appl. Math. Lett.},
      volume={22},
      number={6},
      pages={902\ndash 905},
      url={https://doi.org/10.1016/j.aml.2008.02.018},
      review={\MR{2523603}},
    }

    \bib{DaHi98}{article}{
      author={Davies, E.~B.},
      author={Hinz, A.~M.},
      title={Explicit constants for {R}ellich inequalities in
      {\(L_p(\Omega)\)}},
      date={1998},
      ISSN={0025-5874},
      journal={Math. Z.},
      volume={227},
      number={3},
      pages={511\ndash 523},
      url={http://dx.doi.org/10.1007/PL00004389},
      review={\MR{1612685 (99e:58169)}},
    }

    \bib{DeNDj2024}{article}{
      author={De~Nitti, Nicola},
      author={Djitte, Sidy~Moctar},
      title={Fractional {H}ardy-{R}ellich inequalities via integration by
      parts},
      date={2024},
      ISSN={0362-546X,1873-5215},
      journal={Nonlinear Anal.},
      volume={243},
      pages={Paper No. 113478, 12},
      url={https://doi.org/10.1016/j.na.2023.113478},
      review={\MR{4713201}},
    }

    \bib{DJSJ13}{article}{
      author={Di, Yanmei},
      author={Jiang, Liya},
      author={Shen, Shoufeng},
      author={Jin, Yongyang},
      title={A note on a class of {H}ardy-{R}ellich type inequalities},
      date={2013},
      ISSN={1029-242X},
      journal={J. Inequal. Appl.},
      pages={2013:84, 6},
      url={http://dx.doi.org/10.1186/1029-242X-2013-84},
      review={\MR{3037623}},
    }

    \bib{GGM2004}{article}{
      author={Gazzola, Filippo},
      author={Grunau, Hans-Christoph},
      author={Mitidieri, Enzo},
      title={Hardy inequalities with optimal constants and remainder terms},
      date={2004},
      ISSN={0002-9947},
      journal={Trans. Amer. Math. Soc.},
      volume={356},
      number={6},
      pages={2149\ndash 2168},
      url={http://dx.doi.org/10.1090/S0002-9947-03-03395-6},
      review={\MR{2048513 (2005c:26031)}},
    }

    \bib{GT01}{book}{
      author={Gilbarg, David},
      author={Trudinger, Neil~S.},
      title={Elliptic partial differential equations of second order},
      series={Classics in Mathematics},
      publisher={Springer-Verlag},
      address={Berlin},
      date={2001},
      ISBN={3-540-41160-7},
      note={Reprint of the 1998 edition},
      review={\MR{1814364 (2001k:35004)}},
    }

    \bib{HaTa2025}{article}{
      author={Hamamoto, Naoki},
      author={Takahashi, Futoshi},
      title={A curl-free improvement of the {R}ellich--{H}ardy inequality with
      weight},
      date={2025},
      ISSN={1536-1365,2169-0375},
      journal={Adv. Nonlinear Stud.},
      volume={25},
      number={4},
      pages={1204\ndash 1234},
      url={https://doi.org/10.1515/ans-2023-0193},
      review={\MR{4974409}},
    }

    \bib{HL52}{book}{
      author={Hardy, G.~H.},
      author={Littlewood, J.~E.},
      author={P{\'o}lya, G.},
      title={Inequalities},
      publisher={Cambridge, at the University Press},
      date={1952},
      note={2d ed},
      review={\MR{0046395 (13,727e)}},
    }

    \bib{HeKiMa2006}{book}{
      author={Heinonen, Juha},
      author={Kilpel\"{a}inen, Tero},
      author={Martio, Olli},
      title={Nonlinear potential theory of degenerate elliptic equations},
      publisher={Dover Publications, Inc., Mineola, NY},
      date={2006},
      ISBN={0-486-45050-3},
      note={Unabridged republication of the 1993 original},
      review={\MR{2305115}},
    }

    \bib{Her1977}{article}{
      author={Herbst, Ira~W.},
      title={Spectral theory of the operator
      {\((p\sp{2}+m\sp{2})\sp{1/2}-Ze\sp{2}/r\)}},
      date={1977},
      ISSN={0010-3616,1432-0916},
      journal={Comm. Math. Phys.},
      volume={53},
      number={3},
      pages={285\ndash 294},
      url={http://projecteuclid.org/euclid.cmp/1103900706},
      review={\MR{436854}},
    }

    \bib{Mit00}{article}{
      author={Mitidieri, {\`E}.},
      title={A simple approach to {H}ardy inequalities},
      date={2000},
      ISSN={0025-567X},
      journal={Mat. Zametki},
      volume={67},
      number={4},
      pages={563\ndash 572},
      url={http://dx.doi.org/10.1007/BF02676404},
      review={\MR{1769903 (2001f:26022)}},
    }

    \bib{Mus2014}{article}{
      author={Musina, Roberta},
      title={Weighted {S}obolev spaces of radially symmetric functions},
      date={2014},
      ISSN={0373-3114},
      journal={Ann. Mat. Pura Appl. (4)},
      volume={193},
      number={6},
      pages={1629\ndash 1659},
      url={http://dx.doi.org/10.1007/s10231-013-0348-4},
      review={\MR{3275254}},
    }

    \bib{Oka96}{article}{
      author={Okazawa, Noboru},
      title={{\(L^p\)}-theory of {S}chr\"odinger operators with strongly
      singular potentials},
      date={1996},
      ISSN={0289-2316},
      journal={Japan. J. Math. (N.S.)},
      volume={22},
      number={2},
      pages={199\ndash 239},
      review={\MR{1432373 (98a:35098)}},
    }

    \bib{KO90}{book}{
      author={Opic, Bohum{\'\i}r},
      author={Kufner, Alois},
      title={Hardy-type inequalities},
      series={Pitman Research Notes in Mathematics Series},
      publisher={Longman Scientific \& Technical},
      address={Harlow},
      date={1990},
      volume={219},
      ISBN={0-582-05198-3},
      review={\MR{1069756 (92b:26028)}},
    }

    \bib{Rell1956}{inproceedings}{
      author={Rellich, Franz},
      title={Halbbeschr\``ankte {D}ifferentialoperatoren h\''oherer {O}rdnung},
      date={1956},
      booktitle={Proceedings of the {I}nternational {C}ongress of
      {M}athematicians, 1954, {A}msterdam, vol. {III}},
      publisher={Erven P. Noordhoff N. V., Groningen},
      pages={243\ndash 250},
      review={\MR{88624}},
    }

    \bib{Rell1969}{book}{
      author={Rellich, Franz},
      title={Perturbation theory of eigenvalue problems},
      publisher={Gordon and Breach Science Publishers, New York-London-Paris},
      date={1969},
      note={Assisted by J. Berkowitz, With a preface by Jacob T. Schwartz},
      review={\MR{240668}},
    }

    \bib{Tak2015}{article}{
      author={Takahashi, Futoshi},
      title={A simple proof of {H}ardy's inequality in a limiting case},
      date={2015},
      ISSN={0003-889X},
      journal={Arch. Math. (Basel)},
      volume={104},
      number={1},
      pages={77\ndash 82},
      url={http://dx.doi.org/10.1007/s00013-014-0711-8},
      review={\MR{3299153}},
    }

    \bib{TZ07}{article}{
      author={Tertikas, A.},
      author={Zographopoulos, N.~B.},
      title={Best constants in the {H}ardy-{R}ellich inequalities and related
      improvements},
      date={2007},
      ISSN={0001-8708},
      journal={Adv. Math.},
      volume={209},
      number={2},
      pages={407\ndash 459},
      url={http://dx.doi.org/10.1016/j.aim.2006.05.011},
      review={\MR{2296305 (2007m:26014)}},
    }

  \end{biblist}
\end{bibdiv}

\end{document}